\documentclass[11pt,reqno]{amsart}

\usepackage{amssymb}
\usepackage{amsmath,amsthm}
\usepackage{graphicx}
\usepackage[pdftex,unicode,plainpages=false]{hyperref}
\usepackage{lmodern,xfrac}

\theoremstyle{plain}

\newtheorem{prop}{Proposition}
\newtheorem{theor}{Theorem}
\theoremstyle{definition}
\newtheorem{exam}{Example}
\newtheorem{ass}{Assumptions}

\pagestyle{plain}
\footskip=25pt

\newcommand{\e}{\mathrm{e}}

\newcommand{\R}{\mathbb{R}}
\newcommand{\N}{\mathbb{N}}

\begin{document}

\title{Ornstein-Uhlenbeck approximation of one-step processes: a differential equation approach}

\author{E.~Sikolya}
\address{Department of Applied Analysis and Computational Mathematics\\
E\"otv\"os Loránd University\\
Budapest, Hungary}
\email[E.~Sikolya]{seszter@cs.elte.hu}

\author{P.~L.~Simon}
\address{Department of Applied Analysis and Computational Mathematics\\
E\"otv\"os Loránd University\\
Budapest, Hungary}
\email[P.~L.~Simon]{simonp@cs.elte.hu}

\date{}

\subjclass[2010]{35Q84, 34B05, 60J28.}
\keywords{Mean-field model, exact master equation, Fokker-Planck equation.}

\begin{abstract}
The steady state of the Fokker-Planck equation corresponding to a density dependent one-step process is approximated by a suitable normal distribution. Starting from the master equations of the process, written in terms of the time dependent probabilities, $p_k(t)$ of the states $k=0,1,\ldots , N$, their continuous (in space) version, the Fokker-Planck equation is formulated. This PDE approximation enables us to create analytic approximation formulas for the steady state distribution. These formulas are derived based on heuristic reasoning and then their accuracy is proved to be of order $1/N^{\beta}$ with some power $\beta <1$.
\end{abstract}

\maketitle

\section{Introduction}

Deterministic limits and diffusion approximations of density dependent Markov processes have been widely studied since the early works of Kurtz and Barbour \cite{Barbour, Kurtz71}. In these pioneering papers a functional law of large numbers and a central limit theorem were established, claiming that a density dependent process converges (uniformly in probability) over any finite time interval to the solution of the deterministic mean-field ODE model and providing a PDE diffusion approximation for the fluctuations of the process around the deterministic trajectory. These results were put later in a unified context in the framework of martingale theory \cite{EthierKurtz}. The approximation results were motivated by and applied to stochastic population models \cite{Pollett,Ross} and network processes \cite{BallNeal, Barratetal, Danon, Nekovee}. Our main motivation is SIS epidemic propagation on a random graph when the state space is $\{0,1, \ldots , N\}$ with $N$ denoting the number of nodes in the network, and $p_k(t)$ is the probability that there are $k$ infected nodes at time $t$. The process can be described by a density dependent Markov chain with possible transitions from state $k$ to $k-1$ with recovery and to state $k+1$ with infection. The probabilities $p_k(t)$ are determined by a system of linear differential equations, called master equations. Solving this system (with a given initial condition) yields the full description of the process enabling us to view the problem from a differential equation perspective.

New approaches for deriving deterministic limits and diffusion approximations, based purely on differential equation techniques, has been developed recently in \cite{BKS15, BKSS12, KunszSimon16, SK10}. In \cite{BKSS12} it is shown by using the approximation theory of operator semigroups that the difference between the expected value $\sum \frac{k}{N}p_k(t)$ and the mean-field approximation is of order $1/N$. This operator semigroup approach enabled the authors to approximate not only the expected value but also the distribution $p_k$ itself using a partial differential equation in \cite{BKS15}. The approximation is based on introducing a two-variable function $u$ for which $u(t, k/N) \approx p_k(t)$ and deriving the Fokker-Planck equation \cite{risken2012fokker}. Then the master equation can be considered to be the discretisation of the Fokker-Planck equation in an appropriate sense. Armbruster and co\-workers developed a simple approach in \cite{Armbruster}, based only on elementary ODE and probability tools, to prove that the accuracy of the mean-field approximation is order $1/N$, providing also lower and upper bounds for the expected value that can be used for finite $N$ (in contrast to the asymptotic results).

According to \cite{Ross}, the diffusion approximation can be strengthened by identifying an approximating Ornstein-Uhlenbeck process. Our main focus in this paper is on the approximation of the stationary solution of the Fokker-Planck equation by a normal distribution. This can be carried out by approximating the Fokker-Planck equation with a parabolic PDE, in which the drift coefficient is linear and the diffusion coefficient is constant, hence it corresponds to an Ornstein-Uhlenbeck process, see Section 5.3 in \cite{risken2012fokker}. The solution of this approximating PDE can be given explicitly as a normal distribution, moreover, we can prove by using only elementary differential equation techniques that the difference between the stationary solutions of the Fokker-Planck equation and its approximation is of order $1/N^{\beta}$ with some power $\beta <1$.

Although the problem can be formulated in very general terms, here we restrict ourselves to a specific situation which creates a balance between tractability and mathematical generality. We make the following three assumptions. First, the process is assumed to be Markovian and density dependent as it is defined in \cite{Ross}. The state space is then a subset of $\mathbb{Z}^D$, and our second assumption is that $D=1$ with the state space chosen as $\{0,1, \ldots , N\}$. Finally, we assume that the transition from state $k$ is possible only to states $k-1$ and to $k+1$, i.e. only one-step processes are considered (called also counting or birth-death processes). The second and third assumptions are mainly technical, i.e. the proof is probably extendable to the general density dependent case. We note that in our case the transition matrix is tridiagonal, hence powerful methods, e.g. that developed recently by Smith and Shahrezaei \cite{SmithShahrezaei} can be used for computational purposes. However, here our goal is the theoretical approximation of the steady state distribution, which is given by the eigenvector corresponding to the zero eigenvalue of the transition matrix. This is approximated by the steady state solution of the Fokker-Planck equation. In the special case, when the transition rates depend linearly on $k$, the coordinates of the eigenvector are given by a binomial distribution and the steady state of the Fokker-Planck equation is a normal distribution, hence our approximation result reduces to the Moivre-Laplace theorem. We will prove that a similar result holds in the nonlinear case as well. A novelty of our result is that it is formulated in differential equation terms and its proof uses only elementary analysis techniques. Hence it may be reachable for a broader part of the scientific community, including those who are more familiar with differential equations than stochastic techniques.

The paper is structured as follows. The problem setting is formulated in Section \ref{sec:setting}. Then, as a motivation for the further study, the approximation result is presented in the case, when the transition rates depend linearly on $k$ in Section \ref{sec:lincoeff}. Our main general approximation result is formulated in Section \ref{sec:general} and proved in Section \ref{sec:proof}. In Section \ref{sec:discussion} we give a brief outlook to further results on time dependent solutions.

\section{Setting of the problem} \label{sec:setting}

Consider a continuous time Markov chain with state space $\{0,1, \ldots , N\}$. Denoting by $p_k(t)$ the probability of state $k$ at time $t$ and assuming that transition from state $k$ is possible only to states $k-1$ and $k+1$, the \emph{master equation} of the process takes the form
\begin{equation*}\tag{ME}
    \dot{p}_k= a_{k-1}p_{k-1}-(a_{k}+c_{k})p_k+c_{k+1}p_{k+1},\quad k=0,\ldots,  N .
\end{equation*}
The equation corresponding to $k=0$ does not contain the first term in the right hand side, while that corresponding to $k=N$ does not contain the third term, i.e. there are no terms belonging to $a_{-1}$ and to $c_{N+1}$. Moreover, in order to have a proper Markov chain, where the sum of each coloumn in the transition matrix is zero, we assume that $a_N=0=c_0$.

Several network processes can be described by this prototype model. For example, in the case of $SIS$ propagation on a complete graph, or on a configuration random graph $p_k(t)$ is the probability that there are $k$ infected nodes. For a complete graph $a_k=\tau k(N-k)$, $c_k=\gamma k$, where $\tau=\beta/N$ is the rate of infection across an edge and $\gamma$ is the rate of recovery of a node. (It is important here that the infection rate $\tau$ scales with $1/N$ because otherwise the infection pressure to a node would tend to infinity as the number of nodes, together with the degree of a node, tend to infinity.) For configuration random graphs with different degree distributions, e.g. regular random graphs and power-law graphs, the coefficient $a_k$ was determined numerically from simulations in \cite{akckcikk}.

The infinite size limit, i.e. the case when $N\to \infty$, can be described by differential equations in the so-called density dependent case, when the transition rates $a_k$ and $c_k$ can be given by non-negative, continuous functions $A,C:[0,1]\to [0,+\infty)$ satisfying $A(1)=0=C(0)$ as follows
\begin{equation} \label{eq:dendep}
\frac{a_k}{N} = A\left( \frac{k}{N}\right)  \quad \mbox{ and } \quad \frac{c_k}{N} = C\left( \frac{k}{N}\right).
\end{equation}
We note that the conditions $A(1)=0=C(0)$ ensure $a_N=0=c_0$. The special case when these functions are linear or constant can be fully described mathematically, and will serve as motivation for studying the nonlinear case. We note that this definition is the special case of Definition 3.1 in \cite{Ross}.

\subsection{Deterministic limit: mean-field equation}

Once the above system is solved for $p_k$, we can determine the expected value (first moment) as
\begin{equation}
m_1(t) = \sum_{k=0}^N \frac{k}{N} p_k(t) . \label{y1}
\end{equation}
In the case of epidemic propagation this is the expected proportion of infected nodes at time $t$. In the density dependent case \eqref{eq:dendep} we obtain the following differential equation for $m_1$
\[
\dot m_1 = \sum_{k=0} ^N \left[A\left( \frac{k}{N}\right) - C\left( \frac{k}{N}\right) \right] p_k,
\]
see \cite[Lemma 2]{BKSS12}.
Introducing $y_1$ as the approximation of $m_1$, the approximating closed differential equation -- called \emph{mean-field equation} -- takes the form
\begin{equation*}\tag{MF}
\dot y_1 = A(y_1) -C(y_1).
\end{equation*}
In \cite{BKSS12} it was proved that in a bounded time interval the accuracy of the approximation can be estimated as
\[
\left| m_1(t)-y_1(t)\right| \leq \frac{K}{N},
\]
where $K$ is a constant depending on the length of the time interval.

\subsection{Diffusion approximation: Fokker-Planck equation}

The aim of our investigation in this paper is to approximate the distribution $p_k$ itself. It will be carried out by using a PDE, called \emph{Fokker-Planck equation} \cite{risken2012fokker} that can be considered as the continuous version of the master equation (ME). We wish to approximate the solution $p_k(t)$ by considering it as a discretisation of a continuous function $u(t,z)$ in the interval $[0,1]$, i.e.,
\begin{equation}
u\left(t,\frac{k}{N}\right)=p_k(t)
\label{up}
\end{equation}
for $0\leq k\leq N$. The PDE is usually given in the form
\begin{equation*}\tag{FP}
\partial_t u(t,z) =  \partial_{zz} (g(z)u(t,z)) - \partial_{z} (h(z)u(t,z)) .
\end{equation*}
The functions $g$ and $h$ are determined in such a way that the finite difference discretization of (FP) will yield the master equation (ME). We follow \cite[Section 3]{BKS15} and \cite[Section 2.2]{KunszSimon16}, and use the second order finite difference discretization approximation
\begin{equation}\label{eq:masodrappr}
f(z-h) -2f(z)+f(z+h)\approx  h^2f''(z) , \quad f(z+h)-f(z-h) \approx 2hf'(z)
\end{equation}
for a function $f$ smooth enough.

Thus we get that the desired unknown functions $g$ and $h$ have to be defined in such a way that the relations
\[
g\left(\frac{k}{N}\right) = g_k = \frac{1}{2 N^2} (a_k + c_k) , \quad h\left(\frac{k}{N}\right) = h_k = \frac{1}{N} (a_k - c_k)
\]
hold.

The corresponding boundary conditions are
\begin{equation}\label{eq:bc1}
\partial_z(g u)\left(t,-\frac{1}{2N}\right)-(h u)\left(t,-\frac{1}{2N}\right)=0, \text{ and }
\end{equation}
\begin{equation}\label{eq:bc2}
 \partial_z(g u)\left(t,1+\frac{1}{2N}\right)-(h u)\left(t,1+\frac{1}{2N}\right)=0,
\end{equation}
see \cite[Section 3]{BKS15}.

In the density dependent case \eqref{eq:dendep}, we obtain that $g$ and $h$ can be given as
\begin{equation}\label{eq:FPdenscoeff}
g(z) = \frac{1}{2N} (A(z) +C(z)) , \quad h(z) = A(z)-C(z) .
\end{equation}
Hence, the Fokker-Plank equation for density dependent coefficients is
\begin{equation}\label{eq:FPdens}
\partial_t u(t,z) = \frac{1}{2N} \partial_{zz} ((A(z) +C(z))u(t,z)) - \partial_{z} ((A(z)-C(z))u(t,z))
\end{equation}
subject to boundary conditions
\begin{align}\label{densbc1}
\frac{1}{2N} \partial_z((A+C) u)(t,-\delta)-((A-C) u)(t,-\delta)&=0,\\
\label{densbc2}
\frac{1}{2N} \partial_z((A+C) u)(t,1+\delta)-((A-C) u)(t,1+\delta)&=0,
\end{align}
where $\delta = \frac{1}{2N}$.

\section{Steady state of the Fokker-Planck equation: Linear coefficients}  \label{sec:lincoeff}

If we have linear coefficients in (ME) we obtain special forms for $g$ and $h$, enabling us to determine the steady state solution analytically. Let the coefficients in \eqref{eq:dendep} be given as
\begin{equation}
A(z)=a\cdot(1-z),\quad C(z)=c\cdot z
\label{eq:AClinear}
\end{equation}
with some positive constants $a$ and $c$. Then equation \eqref{eq:FPdens} takes the form
\begin{equation}\label{eq:FPlinear}
\partial_tu(t,z) = \frac{1}{2N} \partial_{zz}\left(((c-a)z+a)u(t,z)\right) - \partial_z\left((a-(a+c)z)u(t,z)\right) .
\end{equation}
The solution in the steady state, i.e. when $\partial_tu(t,z) =0$, will be determined as follows.

\medskip

1. The derivation is carried out first in the special case $a=c$ for the sake of simplicity. Denoting the steady state solution by $U(z)$ it satisfies the ODE
\[
\frac{1}{2N} U''(z) = ((1-2z)U(z))' .
\]
Integrating this equation leads to
\[
\frac{1}{2N} U'(z) = (1-2z)U(z) + K .
\]
The boundary condition \eqref{densbc1} at $z=-1/2N$ implies that $K=0$. Then the equation can be easily integrated again by the separation of the variables yielding
\[
U(z)=U\left(\frac12 \right) \exp\left(-2N(z-\frac12)^2\right).
\]
The constant $U(\frac12)$ has to be chosen in such a way that the integral of $U$ become $1/N$, see \cite[Section 3]{BKS15}. This gives
\begin{equation}
U(z)=\frac{\sqrt{2}}{\sqrt{\pi N}} \exp\left(-2N(z-\frac12)^2 \right). \label{eq:ULinnormal}
\end{equation}
Note that this is an approximation of the binomial distribution. Namely, according to the Moivre-Laplace theorem the binomial distribution
\[
B_k(N,q) = \binom{N}{k} q^k (1-q)^{N-k}
\]
can be approximated by the normal distribution as
\begin{equation}\label{eq:MoivreLap}
B_k(N,q) \approx \frac{1}{\sqrt{Nq(1-q)}} \phi\left( \frac{k-Nq}{\sqrt{Nq(1-q)}} \right),
\end{equation}
where
\[
\phi(x) = \frac{1}{\sqrt{2\pi}} \exp\left( -\frac{x^2}{2} \right)
\]
is the density function of the standard normal distribution. Applying this approximation for $q=1/2$ yields that
\[
B_k(N,1/2) \approx U(k/N)
\]
that is the steady state of the Fokker-Planck equation can be considered as the continuous version of the binomial distribution, which is the steady state of the master equation (ME). The accuracy of the Fokker-Planck equation is illustrated in the left panel of Figure \ref{fig:Linss}, where the exact steady state of the master equation is plotted together with function $U$. One can see that the agreement is excellent even for $N=50$.

\begin{figure}
 \centering
 \includegraphics[scale=0.35]{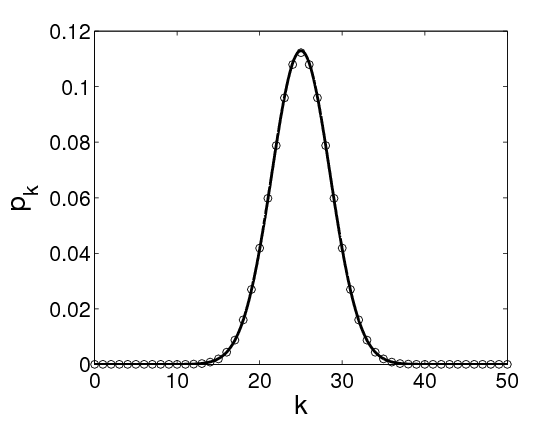}  \includegraphics[scale=0.35]{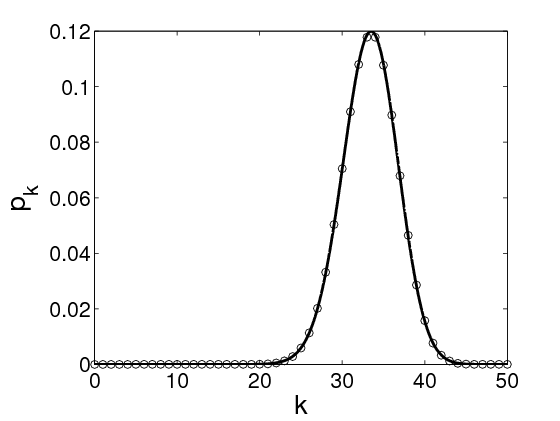}
         \caption{The steady state of the distribution in the linear case, when $A(z)=a(1-z)$ and $C(z)=cz$ for $N=50$. The binomial distribution as the exact solution of the master equation (circles) is shown together with $U$, the solution of the Fokker-Planck equation (continuous curve). In the left panel the case $a=c=1$ is shown, when $U$ is given by \eqref{eq:ULinnormal}. In the right panel the case $a=2$, $c=1$ is shown, when $U$ is given by \eqref{ULingeneral}. }  \label{fig:Linss}
\end{figure}

\medskip

2. In the general case when $a=c$ is not assumed, the stationary solution $U$ satisfies the differential equation
\begin{equation}\label{eq:FPlinstat}
\frac{1}{2N} (((c-a)z+a)U(z))'' = (((a-(a+c)z)U(z))' .
\end{equation}
Integrating this equation leads to
\[
\frac{1}{2N} (((c-a)z+a)U(z))' = (a-(a+c)z)U(z)
\]
since the integrating constant is zero due to the boundary condition. This differential equation can be solved by separation of variables, yielding
\begin{equation}
U(z)=\frac{K}{(c-a)z+a} \e^{H(z)},\label{ULingeneral}
\end{equation}
where the constant $K$ is determined in such a way that the integral of $U$ is $1/N$, see \cite[Section 3]{BKS15}, and
\begin{equation}\label{eq:Hlineset}
H(z)= \frac{2N}{(a-c)^2}\left[ (a^2-c^2)z +2ac\ln(a+(c-a)z) \right] .
\end{equation}
This steady state does not coincide with a normal distribution, however, it will be shown in Subsection \ref{subsec:approx_steadystate} that it can be easily approximated by a normal distribution which is close to the corresponding binomial distribution. This is illustrated in the right panel of Figure \ref{fig:Linss}, where function $U$ is plotted together with the binomial distribution.

\section{Steady state of the Fokker-Planck equation: General case} \label{sec:general}

Introducing the differential operators
\begin{equation}
Dv:= (gv)'-hv \quad \mbox{ and } \quad  Lv:=(Dv)',\label{eq:FPdiffopL}
\end{equation}
the Fokker-Planck equation (FP) takes the form
\begin{equation}\label{eq:FPdiffop}
\partial_t u=Lu.
\end{equation}
The boundary conditions can be written as
\begin{equation}
(Du)(\alpha)=0,\quad (Du)(\beta)=0
\label{eq:bcFPdiffop}
\end{equation}
with $\alpha=-1/2N$, $\beta=1+1/2N.$ Then simple integration shows that $\int_{\alpha}^{\beta}u(t,z)\, dz$ is constant in time. According to \cite[Section 3]{BKS15}, this constant should be equal to $1/N$.

In general, we can say that the Fokker-Plank equation is a parabolic PDE with given initial and boundary conditions. Hence, its solution can be given by using the Fourier method.
We obtain that the solution of \eqref{eq:FPdiffop} subject to the boundary conditions \eqref{eq:bcFPdiffop} can be given as
\[
u(t,z)= \sum_{k=0}^{\infty} c_k \e^{\lambda_k t}v_k(z)
\]
with coefficients $c_k$ determined by the initial condition
\[
u_0(z)= \sum_{k=0}^{\infty} c_k v_k(z) .
\]
The eigenvalue problem belonging to the Fokker-Planck equation can be transformed to a Sturm-Liouville problem, hence it has countably many eigenvalues and its eigenfunctions form a complete system, see \cite{risken2012fokker} p. 106.

\subsection{Stationary solution}

The eigenvalues cannot be determined explicitly in general, hence we consider only the stationary solution. If $\lambda_0 = 0$, then there is a stationary solution $v$ satisfying
\[
Lv = 0, \qquad (Dv)(\alpha) = 0 = (Dv)(\beta) .
\]
The definition of $L$ in \eqref{eq:FPdiffopL} yields that $Dv$ is constant, when $v$ is the stationary solution. According to the boundary condition this constant is zero, i.e. $Dv=0$, yielding $(gv)'=hv$. Introducing the function $f=gv$, this differential equation is equivalent to $f'=hf/g$. This can be integrated to yield $f(z)=K\cdot\exp(H(z))$, where $K$ is a constant and $H$ is the primitive of $h/g$, i.e. $H'=h/g$. Thus the stationary solution is
\begin{equation}\label{eq:stacmo}
v(z)=\frac{K}{g(z)}\e^{H(z)}, \quad \mbox{ with } \quad H'(z)=\frac{h(z)}{g(z)} ,
\end{equation}
where the constant $K=K(N)$ is determined by $\int_{\alpha}^{\beta} v(z)\, dz = \frac1N$.

In the following we turn our attention to the density dependent case and show a method of approximating the stationary solution.

Using \eqref{eq:FPdenscoeff}, the stationary solution \eqref{eq:stacmo} of the Fokker-Planck-equation in the density dependent case has the form
\begin{equation}\label{eq:stacmodens}
v(z)=\frac{2NK}{A(z)+C(z)}\e^{H(z)} \quad \mbox{ with } \quad H'(z)=\frac{2N(A(z)-C(z))}{A(z)+C(z)}.
\end{equation}

Hence, the integral of the function $\frac{A-C}{A+C}$ is needed. If $A$ and $C$ are polynomials then this integral can be explicitly determined, however, the formulas become rather complicated even for low degree polynomials. The case of first order polynomials, when $A(z)=a(1-z)$ and $C(z)=cz$, was solved in Section \ref{sec:lincoeff}.
Then computing the integral of the function $\frac{a-(a+c)z}{a+(c-a)z}$ yields the formula \eqref{eq:Hlineset}, leading to a rather complicated formula for the stationary solution $v$.

\subsection{Approximation of the stationary solution} \label{subsec:approx_steadystate}

A significantly simpler approximation, with normal distribution, can be derived by using the \emph{Ornstein-Uhlenbeck approximation} corresponding to the case when the drift coefficient is linear and the diffusion coefficient is constant, see Section 5.3 in \cite{risken2012fokker}. This uses the linear approximation of the coefficient functions $A-C$ and $A+C$. The approximation is based on the observation that the stationary distribution is concentrated around its expected value, which can be approximated by the steady state solution of the mean-field equation (MF). This steady state is the solution $z^*\in [0,1]$ of the equation
\[A(z^*)-C(z^*)=0,\]
which exists because of the sign conditions $A(0)\geq 0$, $C(0)=A(1)=0$ and $C(1)\geq 0$. Then the following zeroth order approximation is used in the diffusion term
\[A(z)+C(z)\approx A(z^*)+C(z^*)\]
and the first order approximation below is applied in the drift term
\[A(z)-C(z)\approx (A'(z^*)-C'(z^*))(z-z^*).\]
Thus
\[
\frac{A(z)-C(z)}{A(z)+C(z)} \approx \frac{(A'(z^*)-C'(z^*))(z-z^*)}{A(z^*)+C(z^*)} ,
\]
the integral of which is a quadratic function. Hence, the function $H$ in \eqref{eq:stacmodens} is approximated as
\begin{equation}\label{eq:Hfvkoz}
H(z)\approx \overline{H}(z):=Nq (z-z^*)^2  \quad \mbox{ with } \quad q=\frac{A'(z^*)-C'(z^*)}{A(z^*)+C(z^*)} .
\end{equation}
Using this, the stationary distribution \eqref{eq:stacmodens} can be approximated by the normal distribution
\begin{equation}\label{eq:stacmokoz}
v(z)\approx w(z)=K_1\e^{\overline{H}(z)}=K_1\e^{Nq (z-z^*)^2}.
\end{equation}

The first question is how the constant $K_1$ in \eqref{eq:stacmokoz} should be chosen. One idea is that $K_1$ should ensure -- as for $v$ -- that $\int_{\alpha}^{\beta} w(z)\mbox{d}z = \frac1N$. But it turns out that for our purposes the following method is more expedient. Let us take the constant $K=K(N)$ and the primitive function $H$ in \eqref{eq:stacmodens} such that $H(z^*)=0$ which is a natural assumption since in \eqref{eq:Hfvkoz} we approximate $H$ by a function that is $0$ in $z^*.$  This means that
\begin{equation}
H(z)=2N\int_{z^*}^{z}\frac{A(x)-C(x)}{A(x)+C(x)}\, dx=:NB(z)
\label{eq:Hform}
\end{equation}
with
\begin{equation}
B(z)=2\int_{z^*}^{z}\frac{A(x)-C(x)}{A(x)+C(x)}\, dx.
\label{eq:Bform}
\end{equation}
Then let
\[
K_1:=\frac{2NK}{A(z^*)+C(z^*)}
\]
ensuring that
\[
v(z^*)=w(z^*).
\]
Hence,
\begin{equation}\label{eq:wform}
w(z)=\frac{2NK}{A(z^*)+C(z^*)}\e^{Nq (z-z^*)^2}=:\frac{2NK}{A(z^*)+C(z^*)}\e^{Np(z)}
\end{equation}
with
\begin{equation}
p(z)=q (z-z^*)^2.
\label{eq:p}
\end{equation}

\begin{exam} \label{example1}

In the linear case, when $A(z)=a(1-z)$ and $C(z)=cz$, the solution of the equation $A(z)-C(z)=0$ is $z^*=\frac{a}{a+c}$ and $q=-\frac{(a+c)^2}{2ac}$. Hence using \eqref{eq:stacmodens} and \eqref{eq:Hform}, the exact formula for the steady state is
\[
v(z)=\frac{2NK}{a+(c-a)z} \e^{H(z)} ,
\]
where
\[
H(z) = \frac{2N}{(a-c)^2}\left[ (a^2-c^2)(z-z^*) +2ac\ln\frac{a+(c-a)z}{a+(c-a)z^*} \right]
\]
and $K$ is the normalization constant given by the equation $\int_{\alpha}^{\beta} v(z)\, dz = \frac1N$. According to \eqref{eq:wform} the approximating formula for the steady state takes the form
\[
w(z)=\frac{NK(a+c)}{ac} \exp \left( -N\frac{(a+c)^2}{2ac}(z-z^*)^2 \right) .
\]
If $a$ and $c$ are of the same magnitude, then  $w$ yields an extremely accurate approximation of the exact solution $v$, in fact they are visually indistinguishable if plotted in the same figure. In order to show the difference between them they are plotted for $a=10$ and $c=1$ in Figure \ref{fig:LinssNormAppr} together with the steady state of the master equation, which is a binomial distribution with parameter $a/(a+c)$.

\begin{figure}[!ht]
 \centering
 \includegraphics[scale=0.5]{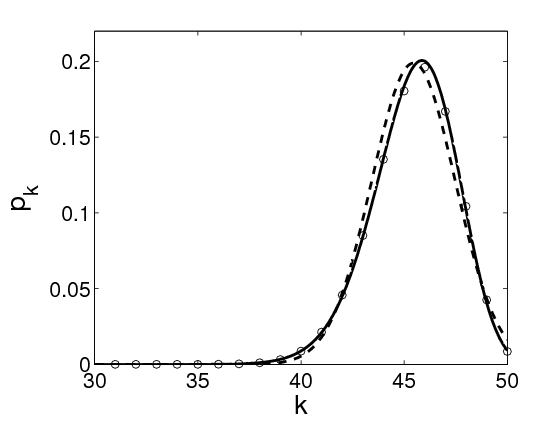}
        \caption{The steady state of the distribution in the linear case, when $A(z)=a(1-z)$ and $C(z)=cz$. The binomial distribution as the exact solution of the master equation (circles) is shown together with $v$, the solution of the Fokker-Planck equation (continuous curve), and with $w$, the solution of the approximate Fokker-Planck equation (dashed curve). The parameter values are $N=50$, $a=10$ and $c=1$.}  \label{fig:LinssNormAppr}
\end{figure}
\end{exam}

Our purpose is now to give exact bounds on the accuracy of the steady state approximation $w$ given by \eqref{eq:wform} in the general density dependent case. That is, we will estimate the distance of the functions
\begin{equation}
v(z)=\frac{2NK}{A(z)+C(z)}\e^{NB(z)}\quad \text{ and } \quad w(z)=\frac{2NK}{A(z^*)+C(z^*)}\e^{Np(z)}
\label{eq:vw}
\end{equation}
for $z\in[0,1]$, where $B(z)$ is given in \eqref{eq:Bform}, $p(z)$ is given in \eqref{eq:p} and $z^*$ is the solution of the equation $A(z^*)-C(z^*)=0$.
It turns out that if $N$ is large enough, the difference is $\mathcal{O}(N^{-\beta})$ for any $0<\beta<1$. In order to formulate this statement rigorously, we collect the assumptions about the coefficient functions $A$ and $C$.
\begin{ass} \label{ass:main}
The functions $A$ and $C$ are assumed to satisfy the following conditions.
\begin{description}
	\item[a1] $A\in C^2[0,1]$, $C\in C^2[0,1]$ are nonnegative functions;
	\item[a2] $A+C>0$ on $[0,1]$ and $z^*$ is the unique root of $A-C$ in $[0,1]$;
	\item[a3] $A'(z^*)-C'(z^*)<0$.
\end{description}
\end{ass}
These assumptions imply that $z^*\in (0,1)$ and $A-C$ is positive on $[0,z^*)$ and negative on $(z^*,1]$. Furthermore, $q$ given in \eqref{eq:Hfvkoz}, is a negative number. Assumptions \ref{ass:main} also imply that $z^*$ is a globally stable equilibrium point of the mean-field equation
\begin{equation*}\tag{MF}
\dot{z}=A(z)-C(z)
\end{equation*}
in $[0,1].$
We are now in the position to formulate our main result.
\begin{theor}\label{thm:main}
Let $v$ be the steady state of the Fokker-Planck equation and $w$ be its approximation given by \eqref{eq:vw}. Let the coefficient functions $A$ and $C$ satisfy Assumptions \ref{ass:main}. Then for each $0<\beta<1$ there exist $C>0$ and $N_0=N_0(\beta,C)\in\N$ such that
\begin{equation}
|v(z)-w(z)|\leq\frac{C}{N^{\beta}},\quad N\geq N_0,\quad z\in[0,1].
\end{equation}
\end{theor}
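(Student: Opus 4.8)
The plan is to exploit that $v$ and $w$ carry the same normalization constant $K$ and the same peak at $z^*$, so that $|v-w|$ reduces to comparing the two exponents $NB$ and $Np$ and the two prefactors $1/(A+C)$ and $1/(A(z^*)+C(z^*))$. Since both densities are sharply concentrated near $z^*$, I would split $[0,1]$ into a shrinking neighbourhood of $z^*$ and its complement, treat the complement by exponential smallness, and treat the neighbourhood by Taylor expansion.

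First I would record the consequences of Assumptions~\ref{ass:main}. Writing $S:=A+C$, a2 gives $m:=\min_{[0,1]}S>0$ and, since $r:=(A-C)/(A+C)\in C^2[0,1]$, we get $B\in C^3[0,1]$ with $B(z^*)=0$, $B'(z^*)=2r(z^*)=0$ and $B''(z^*)=2r'(z^*)=2q$. As $p(z)=q(z-z^*)^2$ matches $B$ to second order at $z^*$, the difference $B-p$ vanishes to order three there, so Taylor's theorem with the continuous third derivative yields a global cubic bound $|B(z)-p(z)|\le C_3|z-z^*|^3$ on $[0,1]$. Moreover, since $B<0$ off $z^*$ while $B(z)/(z-z^*)^2\to q<0$, a compactness argument produces a quadratic upper bound $B(z)\le -c_0(z-z^*)^2$ on $[0,1]$ with $c_0>0$, and of course $p(z)=-|q|(z-z^*)^2$.

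Next I would estimate the shared constant; only the upper bound $2NK\le c_2N^{-1/2}$ is needed. This follows from a one-sided Laplace estimate: restricting $\int_\alpha^\beta S(z)^{-1}\e^{NB(z)}\,dz$ to $|z-z^*|\le N^{-1/2}$, replacing $B$ by $q(z-z^*)^2$ up to the cubic error (a factor tending to $1$), and substituting $t=\sqrt N(z-z^*)$ shows this integral is $\ge c_1N^{-1/2}$; since $K$ is fixed by $2NK\int_\alpha^\beta S^{-1}\e^{NB}=1/N$, the bound on $K$ follows. Now fix $\beta\in(0,1)$, choose $\gamma\in[1/3,1/2)$ with $3\gamma-\tfrac12\ge\beta$ (possible precisely because $\beta<1$), and set $\delta_N:=N^{-\gamma}$. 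On the far set $|z-z^*|>\delta_N$ the quadratic bounds give $v(z),w(z)\le (2NK/m)\,\e^{-cN^{1-2\gamma}}$ with $c=\min(c_0,|q|)$; as $\gamma<1/2$ this is super-polynomially small, so $|v-w|\le N^{-\beta}$ there for $N$ large. On the near set $|z-z^*|\le\delta_N$ I would write $v-w=\mathrm{I}+\mathrm{II}$ with $\mathrm{I}=2NK\big(S(z)^{-1}-(S^*)^{-1}\big)\e^{NB(z)}$ and $\mathrm{II}=(2NK/S^*)\big(\e^{NB(z)}-\e^{Np(z)}\big)$, where $S^*:=A(z^*)+C(z^*)$. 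Term $\mathrm{I}$ is bounded via the Lipschitz estimate $|S(z)^{-1}-(S^*)^{-1}|\le L|z-z^*|/m^2$ and $\e^{NB}\le1$, giving $|\mathrm{I}|\le C\,N^{-1/2}\delta_N=C\,N^{-1/2-\gamma}$. For $\mathrm{II}$ I would use $\e^{NB}-\e^{Np}=\e^{Np}(\e^{N(B-p)}-1)$, the inequalities $\e^{Np}\le1$ and $|\e^x-1|\le|x|\e^{|x|}$, and the cubic bound to get $|\mathrm{II}|\le (2NK/S^*)\,C_3N\delta_N^3\,\e^{C_3N\delta_N^3}$; here $\gamma\ge1/3$ keeps $N\delta_N^3\le1$ so the exponential is bounded, leaving $|\mathrm{II}|\le C\,N^{1/2-3\gamma}$. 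Since $1/2+\gamma>3\gamma-\tfrac12\ge\beta$, both terms are $\le C\,N^{-\beta}$, and combining the near and far estimates finishes the proof.

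I expect the main obstacle to be the normalization step together with the bookkeeping of the exponent $\gamma$. The delicate point is to obtain a clean, uniform-in-$N$ lower bound on $\int_\alpha^\beta S^{-1}\e^{NB}$ by Laplace's method under only $A,C\in C^2$, checking that the cubic remainder does not destroy the $N^{-1/2}$ Gaussian scaling. Intertwined with this is the competition between the near-region constraint $\gamma\ge1/3$, the decay requirement $3\gamma-\tfrac12\ge\beta$, and the far-region requirement $\gamma<1/2$: these are simultaneously satisfiable exactly when $\beta<1$, which is precisely the restriction in the statement, so the argument has to be organised so that all three hold for a single choice of $\gamma$.
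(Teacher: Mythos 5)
Your proposal is correct and follows essentially the same route as the paper: the cubic Taylor bound on $B-p$, the quadratic upper bound $B(z)\le R(z-z^*)^2$ obtained by compactness (the paper's Proposition~\ref{prop:segedall1}), the near/far split at radius $N^{-\gamma}$ with $\gamma\in[1/3,1/2)$, the triangle inequality separating the prefactor and exponent differences, and the Gaussian estimate $NK=\mathcal{O}(N^{-1/2})$ all appear in the paper's argument (Propositions~\ref{prop:thmbiz1}--\ref{prop:thmbiz3}). The only differences are organizational: you merge the paper's two separate near/far decompositions into one with a single radius parameter, and you prove only the one-sided bound on $K$ that is actually needed.
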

The proof contains some rather technical tools and is expounded in the next section.

We conclude this section by summarizing our approximation result in order to ease its practical application. Our goal was to approximate the steady state distribution of the master equation (ME), which is given by the leading eigenvector of the matrix in the right hand side of (ME). This distribution is approximated by the steady state $v$ of the Fokker-Planck equation given in \eqref{eq:stacmodens}. This function may be difficult to determine explicitly because of the integration in the definition \eqref{eq:Bform} of $B(z)$, hence a further approximation is given by $w$, as a normal distribution. That is the steady state distribution $p_k$ of the master equation (ME) can be approximated as $p_k\approx w(k/N)$ by using the explicit formula of $w$ in \eqref{eq:wform}. The accuracy of this approximation was illustrated by Example \ref{example1} and is formulated in rigorous terms in Theorem \ref{thm:main}.

\section{Proof of the main theorem} \label{sec:proof}

To prove Theorem \ref{thm:main} we will use the Taylor expansion of function $B$ given in \eqref{eq:Bform}. Since $A(z^*)=C(z^*)$ and
\begin{align*}
B'(z)&=2\frac{A(z)-C(z)}{A(z)+C(z)},\\
B''(z)&=2\frac{(A'(z)-C'(z))(A(z)+C(z))-(A(z)-C(z))(A'(z)+C'(z))}{(A(z)+C(z))^2},
\end{align*}
we obtain that together with
\begin{equation}\label{eq:Bpzcsillag1}
B(z^*)=p(z^*)=0,\quad B'(z^*)=p'(z^*)=0
\end{equation}
also
\begin{equation}\label{eq:Bpzcsillag2}
B''(z^*)=p''(z^*)=2\frac{A'(z^*)-C'(z^*)}{A(z^*)+C(z^*)}=2q<0
\end{equation}
holds.

The next two propositions serve as important technical tools for the proof of our main theorem.
\begin{prop}\label{prop:segedall1}
There exist negative real numbers $r$ and $R$ with
\[r\leq q\leq R<0\]
such that
\[r(z-z^*)^2\leq B(z)\leq R(z-z^*)^2\text{ for all }z\in[0,1].\]
\end{prop}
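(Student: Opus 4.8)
The plan is to leverage the Taylor data already computed, namely that $B(z^*)=B'(z^*)=0$ and $B''(z^*)=2q<0$, and upgrade this local information about $B$ near $z^*$ to a two-sided global bound of the stated quadratic form on the whole interval $[0,1]$. The natural object to study is the quotient
\[
\Phi(z):=\frac{B(z)}{(z-z^*)^2}, \qquad z\in[0,1]\setminus\{z^*\},
\]
together with its value at $z^*$. I would show that $\Phi$ extends to a continuous function on all of $[0,1]$ with $\Phi(z^*)=q$, and then define $R:=\max_{[0,1]}\Phi$ and $r:=\min_{[0,1]}\Phi$; these extrema exist and are attained because $\Phi$ is continuous on the compact interval $[0,1]$. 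By construction $r\leq\Phi(z)\leq R$ for all $z$, which upon multiplying by $(z-z^*)^2\geq 0$ yields exactly $r(z-z^*)^2\leq B(z)\leq R(z-z^*)^2$, and since $\Phi(z^*)=q$ lies between the min and the max we automatically get $r\leq q\leq R$.

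The first key step is to verify the continuous extension of $\Phi$ at $z^*$. By assumption a1 the functions $A,C\in C^2[0,1]$, so $B$ is $C^2$ on a neighbourhood of $z^*$ (the denominator $A+C$ is bounded away from zero by a2). Using the second-order Taylor expansion with integral remainder, or simply applying L'Hôpital twice together with \eqref{eq:Bpzcsillag1} and \eqref{eq:Bpzcsillag2}, one finds
\[
\lim_{z\to z^*}\frac{B(z)}{(z-z^*)^2}=\frac{B''(z^*)}{2}=q,
\]
so setting $\Phi(z^*):=q$ makes $\Phi$ continuous at $z^*$; away from $z^*$ the continuity of $\Phi$ is immediate since $B$ is continuous and the denominator does not vanish. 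The second key step is the sign claim $R<0$, i.e. that the maximum of $\Phi$ is strictly negative. This is where the argument requires care rather than routine estimation.

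The strictly-negative-maximum claim is the main obstacle. It is not enough to know $\Phi(z^*)=q<0$ locally; I must rule out $\Phi$ touching or crossing zero anywhere on $[0,1]$. The clean way is to show $B(z)<0$ for every $z\in[0,1]\setminus\{z^*\}$, which forces $\Phi<0$ pointwise and hence $R=\max\Phi<0$ by compactness. To see $B<0$ off $z^*$, recall from \eqref{eq:Bform} that $B'(z)=2\frac{A(z)-C(z)}{A(z)+C(z)}$, and the remarks following Assumptions \ref{ass:main} establish that $A-C>0$ on $[0,z^*)$ and $A-C<0$ on $(z^*,1]$, while $A+C>0$ throughout by a2. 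Thus $B'>0$ on $[0,z^*)$ and $B'<0$ on $(z^*,1]$, so $B$ strictly increases up to $z^*$ and strictly decreases afterward; combined with $B(z^*)=0$ this gives $B(z)<0$ for all $z\neq z^*$. Once $B<0$ off $z^*$ is in hand, the continuity of $\Phi$ and compactness of $[0,1]$ deliver $R<0$ at once, and the analogous lower bound $r>-\infty$ is free since $\Phi$ is continuous on a compact set; one records $r$ as its (finite, negative) minimum, completing the proof.
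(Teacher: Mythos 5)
Your proposal is correct and follows essentially the same route as the paper: both define the quotient $f(z)=B(z)/(z-z^*)^2$, extend it continuously at $z^*$ with value $q$ via L'H\^opital/Taylor, and take $r$ and $R$ as its minimum and maximum on the compact interval. The only difference is that you spell out why the quotient is strictly negative away from $z^*$ (via the sign of $B'$ and $B(z^*)=0$), a point the paper's proof passes over with a brief appeal to Assumptions \ref{ass:main}; this is a welcome addition, not a deviation.
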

\begin{proof}
Consider the function $f:[0,1]\to\R$,
\[
f(z):=
\begin{cases}
\frac{B(z)}{(z-z^*)^2}, & z\neq z^*,\\
q, & z=z^*.
\end{cases}
\]
Using L'Hospital's rule and \eqref{eq:Bpzcsillag2} we obtain
\[\lim_{z\to z^*}\frac{B(z)}{(z-z^*)^2}=\lim_{z\to z^*}\frac{B'(z)}{2(z-z^*)}=\lim_{z\to z^*}\frac{B''(z)}{2}=q.\]
Hence, by Assumptions \ref{ass:main} $f$ is a negative continuous function on $[0,1]$. Taking $r$ as the minimum and $R$ as the maximum of $f$ we obtain the statement.
\end{proof}

\begin{prop}\label{prop:segedall2}
Let $|y|\leq 1$. Then
\[|1-\e^{y}|\leq 2|y|.\]
\end{prop}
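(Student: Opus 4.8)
The plan is to prove the elementary inequality $|1-\e^y|\leq 2|y|$ for $|y|\leq 1$ directly from the power series expansion of the exponential. First I would write $1-\e^y = -\sum_{n=1}^{\infty} \frac{y^n}{n!} = -y\sum_{n=1}^{\infty}\frac{y^{n-1}}{n!}$, so that taking absolute values and applying the triangle inequality gives
\[
|1-\e^y| \leq |y|\sum_{n=1}^{\infty}\frac{|y|^{n-1}}{n!}.
\]
The goal is then to bound the remaining sum by $2$ whenever $|y|\leq 1$.

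The key step is to estimate $\sum_{n=1}^{\infty}\frac{|y|^{n-1}}{n!}$. Since $|y|\leq 1$, each factor $|y|^{n-1}\leq 1$, so the sum is bounded above by $\sum_{n=1}^{\infty}\frac{1}{n!} = \e - 1 \approx 1.718 < 2$. This immediately yields the desired factor of $2$. Thus I would conclude $|1-\e^y|\leq |y|(\e-1)\leq 2|y|$, which is in fact slightly stronger than claimed.

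An even shorter route, which I would mention as an alternative, is to use the mean value theorem: writing $1-\e^y = -(\e^y - \e^0) = -\e^{\xi}\,y$ for some $\xi$ strictly between $0$ and $y$, we get $|1-\e^y| = \e^{\xi}|y|$. Since $|y|\leq 1$ forces $\xi \leq |y| \leq 1$, we have $\e^{\xi}\leq \e < 3$, and more carefully $\e^{\xi}\leq \e^{1} < 2.72$; to reach the clean constant $2$ one only needs $\e^{\xi}\leq 2$, which holds when $\xi \leq \ln 2 \approx 0.693$. This route is clean for $y\leq \ln 2$ but requires slightly more care on the upper end, so I would favor the series argument, which handles the whole range $|y|\leq 1$ uniformly.

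I do not anticipate any real obstacle here: the statement is a soft estimate with a generous constant, and the series bound $\e-1<2$ provides ample slack. The only point demanding minor attention is ensuring the bound $|y|^{n-1}\leq 1$ is applied correctly for all $n\geq 1$ (including $n=1$, where the exponent is zero), which is immediate under the hypothesis $|y|\leq 1$. The constant $2$ is deliberately crude, which is precisely why the proof can be so short; a sharper constant such as $\e-1$ would follow from exactly the same computation.
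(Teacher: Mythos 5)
Your proof is correct, but it takes a genuinely different route from the paper. The paper argues by cases on the sign of $y$: for $-1\leq y\leq 0$ it uses the tangent-line inequality $\e^y\geq 1+y$ to get $|1-\e^y|=1-\e^y\leq -y=|y|$, and for $0<y\leq 1$ it uses convexity of $x\mapsto \e^x$ (the graph lies below the chord joining $(0,1)$ and $(1,\e)$) to get $\e^y-1\leq(\e-1)y<2y$. Your series argument replaces both cases by the single uniform estimate $|1-\e^y|\leq |y|\sum_{n\geq 1}|y|^{n-1}/n!\leq(\e-1)|y|$, which is valid on all of $|y|\leq 1$ and in fact delivers the sharper constant $\e-1$ throughout, whereas the paper's case analysis only attains $\e-1$ on the positive side and the constant $1$ on the negative side. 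The trade-off is minimal: the paper's version is marginally more self-contained (no infinite series, just two one-line convexity facts), while yours is shorter, uniform in sign, and quantitatively slightly stronger. You are also right to prefer the series over the mean value theorem here, since the crude MVT bound $\e^{\xi}\leq\e$ does not by itself yield the constant $2$ near $y=1$. Either proof is perfectly adequate for the role this lemma plays in Proposition \ref{prop:thmbiz1}, where only the order of magnitude $|1-\e^y|=\mathcal{O}(|y|)$ is used.
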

\begin{proof}
If $-1\leq y\leq 0$ then
\[|1-\e^y|=1-\e^y\leq -y=|y|\leq 2|y|.\]
If $0<y\leq 1$ then
\[|1-\e^y|=\e^y-1<2y=2|y|\]
where we have used that $x\mapsto \e^x$ is a strictly convex function.
\end{proof}

The proof of Theorem \ref{thm:main} is then carried out in three main steps. First, in Proposition \ref{prop:thmbiz1} we estimate the difference
\[\left|\e^{NB(z)}-\e^{Np(z)}\right|.\]
As the next step, in Proposition \ref{prop:thmbiz2}, we derive an upper bound of the form $L/N^{\gamma}$ with $0<\gamma<1/2$ for the difference
\[\frac{1}{2NK}|v(z)-w(z)|.\]
Finally, in Proposition \ref{prop:thmbiz3} we prove that $NK=\mathcal{O}(N^{-\frac{1}{2}})$ and this will complete the proof of the theorem.

\begin{prop}\label{prop:thmbiz1}
For each $0<\gamma<\frac{1}{2}$ there exist $D>0$ and $M_0=M_0(D,\gamma)\in\N$ such that
\begin{equation}
\left|\e^{NB(z)}-\e^{Np(z)}\right|\leq\frac{D}{N^{\gamma}},\quad N\geq M_0,\quad z\in[0,1].
\end{equation}
\end{prop}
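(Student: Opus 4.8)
The plan is to factor the difference as
\[
\e^{NB(z)}-\e^{Np(z)}=\e^{Np(z)}\bigl(\e^{N(B(z)-p(z))}-1\bigr),
\]
and to split $[0,1]$ into a shrinking neighbourhood of $z^*$, where the two exponents are so close that Proposition \ref{prop:segedall2} applies, and its complement, where both exponentials are already super-polynomially small by Proposition \ref{prop:segedall1}. The essential input is a cubic estimate for how well $p$ approximates $B$ near $z^*$. Since $A,C\in C^2[0,1]$ and $A+C>0$ by Assumptions \ref{ass:main}, the function $B$ from \eqref{eq:Bform} satisfies $B\in C^3[0,1]$, and $p$ from \eqref{eq:p} is a polynomial, so $g:=B-p\in C^3[0,1]$. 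By \eqref{eq:Bpzcsillag1} and \eqref{eq:Bpzcsillag2} we have $g(z^*)=g'(z^*)=g''(z^*)=0$, hence Taylor's theorem with Lagrange remainder yields a constant $C_3>0$ with
\[
|B(z)-p(z)|\leq C_3\,|z-z^*|^3,\qquad z\in[0,1].
\]

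Next I would fix the threshold. Given $0<\gamma<\tfrac12$, set $\alpha:=\frac{1+\gamma}{3}$, so that $\tfrac13<\alpha<\tfrac12$, and split according to whether $|z-z^*|\leq N^{-\alpha}$ or not. On the near region I use $\e^{Np(z)}\leq 1$ (valid since $p\leq 0$ by \eqref{eq:p}, as $q<0$) together with the cubic bound to estimate the exponent by $|N(B(z)-p(z))|\leq C_3 N^{1-3\alpha}=C_3 N^{-\gamma}$. For $N$ large this is at most $1$, so Proposition \ref{prop:segedall2} gives
\[
\bigl|\e^{NB(z)}-\e^{Np(z)}\bigr|\leq 2\,|N(B(z)-p(z))|\leq 2C_3 N^{-\gamma}.
\]

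On the far region I drop the cancellation and bound each term separately, using the upper estimate $B(z)\leq R(z-z^*)^2$ from Proposition \ref{prop:segedall1} and the explicit form of $p$. Since $R<0$ and $q<0$ and $(z-z^*)^2>N^{-2\alpha}$ there,
\[
\bigl|\e^{NB(z)}-\e^{Np(z)}\bigr|\leq \e^{NR(z-z^*)^2}+\e^{Nq(z-z^*)^2}\leq \e^{RN^{1-2\alpha}}+\e^{qN^{1-2\alpha}}.
\]
Because $\alpha<\tfrac12$ we have $1-2\alpha>0$, so $N^{1-2\alpha}\to\infty$ and both exponentials decay faster than any power of $N$; in particular they are $\leq N^{-\gamma}$ for $N$ large. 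Choosing $D$ and $M_0$ to absorb the constants from the two regions then finishes the proof.

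The main obstacle, and the point where the restriction $\gamma<\tfrac12$ is actually forced, is the balancing of the two regions through the exponent $\alpha$. The near-region bound improves as $\alpha$ increases (it is of size $N^{1-3\alpha}$ through the cubic remainder and Proposition \ref{prop:segedall2}), while the far-region bound needs $1-2\alpha>0$ to make the Gaussian tails super-polynomially small; these two requirements are simultaneously met precisely when $\gamma<\tfrac12$, via $\alpha=\frac{1+\gamma}{3}$. The only genuinely technical ingredient is the cubic Taylor estimate on $B-p$, which rests on the matching of the zeroth, first and second derivatives at $z^*$ recorded in \eqref{eq:Bpzcsillag1}--\eqref{eq:Bpzcsillag2} together with the $C^3$-regularity of $B$ guaranteed by Assumptions \ref{ass:main}.
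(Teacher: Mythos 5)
Your proposal is correct and follows essentially the same route as the paper: the same factorization $\e^{NB}-\e^{Np}=\e^{Np}(\e^{N(B-p)}-1)$, the same choice $\alpha=\frac{1+\gamma}{3}$ splitting $[0,1]$ at $|z-z^*|=N^{-\alpha}$, the same third-order Taylor estimate on $B-p$ (the paper writes the Lagrange remainder as $\frac16 B'''(\bar z)(z-z^*)^3$) combined with Proposition \ref{prop:segedall2} near $z^*$, and the same Gaussian-tail bound via Proposition \ref{prop:segedall1} away from $z^*$. The only cosmetic remark is that your auxiliary name $g:=B-p$ collides with the paper's diffusion coefficient $g$, so it should be renamed.
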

\begin{proof}
Let $0<\gamma<\frac{1}{2}$ be arbitrary and define
\[\alpha:=\frac{\gamma+1}{3}.\]
Then $\frac{1}{3}<\alpha<\frac{1}{2}$.\\
We will prove the statement separately for the cases
\[|z-z^*|<\frac{1}{N^{\alpha}}\quad \text{ and } \quad  |z-z^*|\geq\frac{1}{N^{\alpha}} .\]
Case 1: Let $|z-z^*|<\frac{1}{N^{\alpha}}$ for some fixed $N\in\N$. We will apply the equality
\[\left|\e^{NB(z)}-\e^{Np(z)}\right|=\e^{Np(z)}\cdot\left|1-\e^{N(B(z)-p(z))}\right|.\]
Using Taylor's formula, Assumptions \ref{ass:main}.a1, \eqref{eq:Bpzcsillag1} and \eqref{eq:Bpzcsillag2}, we obtain that for each $z\in [0,1]$ there exists $\bar{z}\in (z^*,z)$ (or $\bar{z}\in (z,z^*)$) such that
\begin{align*}
B(z)-p(z)&=B(z^*)-p(z^*)+(B'(z^*)-p'(z^*))\cdot (z-z^*)+\\
&+\frac{1}{2}(B''(z^*)-p''(z^*))\cdot (z-z^*)^2+\frac{1}{6}(B'''(\bar{z})-p'''(\bar{z}))\cdot (z-z^*)^3\\
&=\frac{1}{6}B'''(\bar{z})\cdot (z-z^*)^3,
\end{align*}
since $p'''(z)=0$, $z\in[0,1]$. Hence, if $z\in[0,1]$, $|z-z^*|<\frac{1}{N^{\alpha}}$ then
\[|N(B(z)-p(z))|=\left|\frac{N}{6}B'''(\bar{z})\cdot (z-z^*)^3\right|\leq \frac{D_1}{6}N^{1-3\alpha}=\frac{D_1}{6N^{\gamma}}\]
with
\[D_1=\max_{z\in [0,1]}|B'''(z)|.\]
Thus, there exists $M_1=M_1(D_1,\gamma)$ such that if $N\geq M_1$ and $|z-z^*|<\frac{1}{N^{\alpha}}$ then
\[|N(B(z)-p(z))|<1.\]
Using Proposition \ref{prop:segedall2} we obtain that for such $N$ and $z$
\[\left|1-\e^{N(B(z)-p(z))}\right|\leq 2N|B(z)-p(z)|\leq \frac{D_1}{3N^{\gamma}}=\frac{D_2}{N^{\gamma}}\]
holds with $D_2=\frac{D_1}{3}$. Hence, if $N\geq M_1$ and $|z-z^*|<\frac{1}{N^{\alpha}}$ then
\[\left|\e^{NB(z)}-\e^{Np(z)}\right|\leq\e^{Np(z)}\cdot\frac{D_2}{N^{\gamma}}\leq \frac{D_2}{N^{\gamma}}\]
since $p(z)\leq 0$ for all $z\in [0,1]$.\\
Case 2: Let $|z-z^*|\geq \frac{1}{N^{\alpha}}$ for some fixed $N\in\N$. Then
\[(z-z^*)^2\geq \frac{1}{N^{2\alpha}}.\]
Using Proposition \ref{prop:segedall1} we know that
\[NB(z)\leq NR(z-z^*)^2\leq RN^{1-2\alpha}\]
since $R$ is negative. Hence there exists $M_2=M_2(\gamma)\in\N$ such that
\[\e^{NB(z)}\leq\e^{RN^{1-2\alpha}}<\frac{1}{2N^{\gamma}},\quad N\geq M_2.\]
Similarly, we obtain that there exists $M_3=M_3(\gamma)\in\N$ such that
\[\e^{Np(z)}=\e^{Nq(z-z^*)^2}\leq \e^{qN^{1-2\alpha}}<\frac{1}{2N^{\gamma}},\quad N\geq M_3.\]
Thus taking $M_4=M_4(\gamma):=\max\{M_2,M_3\}$, if $N\geq M_4$ and  $|z-z^*|\geq \frac{1}{N^{\alpha}}$ then
\[\left|\e^{NB(z)}-\e^{Np(z)}\right|\leq\e^{NB(z)}+\e^{Np(z)}<\frac{1}{N^{\gamma}}.\]
We remark that here the inequality holds for any $\gamma>0$ if $N$ is large enough.\\
Combining cases 1 and 2 yields that there exist $D:=\max\{1,D_2\}>0$ and $M_0=M_0(D,\gamma)=\max\{M_1,M_4\}\in\N$ such that
\begin{equation*}
\left|\e^{NB(z)}-\e^{Np(z)}\right|\leq\frac{D}{N^{\gamma}},\quad N\geq M_0,\quad z\in[0,1].
\end{equation*}
\end{proof}

\begin{prop}\label{prop:thmbiz2}
For each $0<\gamma<\frac{1}{2}$ there exist $D'>0$ and $M_0'=M_0'(D',\gamma)\in\N$ such that
\begin{equation}
\frac{1}{2NK}\left|v(z)-w(z)\right|\leq\frac{D'}{N^{\gamma}},\quad N\geq M_0',\quad z\in[0,1].
\end{equation}
\end{prop}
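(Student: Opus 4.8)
The plan is to work directly with the normalized difference and exploit the structure already isolated in Proposition \ref{prop:thmbiz1}. From \eqref{eq:vw} we have the exact identity
\[
\frac{1}{2NK}\left(v(z)-w(z)\right)=\frac{\e^{NB(z)}}{A(z)+C(z)}-\frac{\e^{Np(z)}}{A(z^*)+C(z^*)} .
\]
Writing $S(z):=A(z)+C(z)$ and $S^*:=S(z^*)$, I would add and subtract the mixed term $\e^{Np(z)}/S(z)$ to obtain the decomposition
\[
\frac{1}{2NK}\left(v(z)-w(z)\right)=\frac{\e^{NB(z)}-\e^{Np(z)}}{S(z)}+\e^{Np(z)}\left(\frac{1}{S(z)}-\frac{1}{S^*}\right),
\]
and then estimate the two summands separately.

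The first summand is routine once Proposition \ref{prop:thmbiz1} is available. By Assumption \ref{ass:main}.a2 the function $S$ is continuous and strictly positive on the compact interval $[0,1]$, so it has a positive minimum and $1/S$ is bounded there by some constant $M>0$. Applying Proposition \ref{prop:thmbiz1} to the numerator then gives that the first summand is at most $MD/N^{\gamma}$ for all $N\geq M_0$ and all $z\in[0,1]$.

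The second summand is where the real work lies, and I expect it to be the main obstacle. Since $A,C\in C^2[0,1]$ and $S$ is bounded away from zero, the function $1/S$ is Lipschitz on $[0,1]$, so $\left|1/S(z)-1/S^*\right|\leq L|z-z^*|$ for some $L>0$. Using $p(z)=q(z-z^*)^2$ with $q<0$, the second summand is bounded by $L\,|z-z^*|\,\e^{qN(z-z^*)^2}$. The crux is then the elementary one-variable bound
\[
\max_{s\geq 0}\, s\,\e^{qNs^2}=\mathcal{O}\!\left(N^{-1/2}\right),
\]
obtained by differentiating $s\mapsto s\,\e^{qNs^2}$ and reading off the maximizer $s=1/\sqrt{-2qN}$. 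Hence the second summand is $\mathcal{O}(N^{-1/2})$ uniformly in $z\in[0,1]$, the key being the interplay between the Lipschitz factor $|z-z^*|$, which is small near $z^*$, and the Gaussian factor $\e^{qN(z-z^*)^2}$, which is small away from $z^*$; their product can never exceed the $N^{-1/2}$ scale.

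To conclude, since $0<\gamma<\tfrac12$ we have $N^{-1/2}\leq N^{-\gamma}$ for every $N\geq 1$, so both contributions are of order $N^{-\gamma}$. Adding them yields a bound of the form $D'/N^{\gamma}$ with $D'$ depending only on $M$, $D$, $L$ and $q$, valid for $N\geq M_0'$, where $M_0'$ may be taken equal to the threshold $M_0$ inherited from Proposition \ref{prop:thmbiz1}. This is exactly the claimed estimate, and it makes transparent why the admissible range is $0<\gamma<\tfrac12$: the Gaussian-times-Lipschitz term produces precisely the $N^{-1/2}$ scale, which is consistent with that range.
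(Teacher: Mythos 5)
Your proof is correct, and it starts from exactly the same decomposition as the paper: insert the mixed term $\e^{Np(z)}/(A(z)+C(z))$, bound the first summand by Proposition \ref{prop:thmbiz1} together with the uniform lower bound $A+C\geq d>0$, and then deal with the remaining term $\e^{Np(z)}\left|\tfrac{1}{A(z)+C(z)}-\tfrac{1}{A(z^*)+C(z^*)}\right|$. The only place where you genuinely diverge is in that second summand. The paper splits $[0,1]$ into the regions $|z-z^*|<N^{-\gamma}$ and $|z-z^*|\geq N^{-\gamma}$: near $z^*$ it uses the Lipschitz bound $|A(z)+C(z)-2A(z^*)|\leq D_1'|z-z^*|$ together with $\e^{Np(z)}\leq 1$, and far from $z^*$ it uses $\e^{Np(z)}\leq\e^{qN^{1-2\gamma}}\leq N^{-\gamma}$ with the difference of reciprocals merely bounded by a constant. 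You instead keep the Lipschitz factor everywhere and observe that
\[
\max_{s\geq 0}\, s\,\e^{qNs^{2}}=\frac{1}{\sqrt{-2qN}}\,\e^{-1/2}=\mathcal{O}\!\left(N^{-1/2}\right),
\]
which gives a single uniform $\mathcal{O}(N^{-1/2})$ bound for that term, dominated by $N^{-\gamma}$ for all $N\geq 1$ since $\gamma<\tfrac12$. This is a cleaner and slightly sharper treatment: it removes the case distinction for this proposition entirely (the case split survives only inside Proposition \ref{prop:thmbiz1}, which you use as a black box) and it isolates the genuine source of the restriction $\gamma<\tfrac12$ in this step, namely the $N^{-1/2}$ scale of the Gaussian-times-Lipschitz product. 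The paper's version buys nothing extra here; the two arguments yield the same conclusion with the same constants up to renaming.
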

\begin{proof}
We will benefit from the following triangle inequality:
\begin{align}
\left|\frac{v(z)-w(z)}{2NK}\right|&=\left|\frac{\e^{NB(z)}}{A(z)+C(z)}-\frac{\e^{Np(z)}}{A(z^*)+C(z^*)}\right|\notag\\
&\leq\left|\frac{\e^{NB(z)}-\e^{Np(z)}}{A(z)+C(z)}\right|+\e^{Np(z)}\left|\frac{1}{A(z)+C(z)}-\frac{1}{2A(z^*)}\right|\label{eq:thmbiz2_0}\\
&=\left|\frac{\e^{NB(z)}-\e^{Np(z)}}{A(z)+C(z)}\right|+\frac{\e^{Np(z)}\left|A(z)+C(z)-2A(z^*)\right|}{2A(z^*)\cdot(A(z)+C(z))}\label{eq:thmbiz2_1}
\end{align}
since $A(z^*)=C(z^*)>0.$ By Assumptions \ref{ass:main}.a2 we also have that there exists $d>0$ such that
\begin{equation}\label{eq:thmbiz2_2}
A(z)+C(z)\geq d,\quad z\in [0,1].
\end{equation}
Let $0<\gamma<\frac{1}{2}$ be arbitrary. We will again distinguish the following two cases according to the position of $z\in[0,1]$:
\[|z-z^*|<\frac{1}{N^{\gamma}}\quad \text{ and } \quad |z-z^*|\geq\frac{1}{N^{\gamma}} .\]
Case 1: Let $|z-z^*|<\frac{1}{N^{\gamma}}$ for some fixed $N\in\N$. We will use \eqref{eq:thmbiz2_1} for this part of the proof.
From Taylor's formula we obtain that for each $z\in [0,1]$ there exists $\bar{z}\in (z^*,z)$ (or $\bar{z}\in (z,z^*)$) such that
\[A(z)+C(z)-2A(z^*)=\left(A'(\bar{z})+C'(\bar{z})\right)\cdot (z-z^*).\]
Hence,
\begin{equation}\label{eq:thmbiz2_3}
\left|A(z)+C(z)-2A(z^*))\right|\leq D_1'|z-z^*|
\end{equation}
with
\[D_1'=\max_{z\in[0,1]}|A'(z)+C'(z)|.\]
Let $D>0$ and $M_0=M_0(D,\gamma)\in\N$ be the constants from Proposition \ref{prop:thmbiz1}. Combining \eqref{eq:thmbiz2_1}, \eqref{eq:thmbiz2_2}, \eqref{eq:thmbiz2_3}, and using that $p(z)\leq 0$, we obtain that if $N\geq M_0$ and $|z-z^*|<\frac{1}{N^{\gamma}}$ then
\begin{equation*}
\frac{1}{2NK}\left|v(z)-w(z)\right|\leq\frac{D}{dN^{\gamma}}+\frac{1}{2dA(z^*)}\cdot\frac{D_1'}{N^{\gamma}}.
\end{equation*}
Thus, there exist constants
\[D_2':=\frac{D}{d}+\frac{D_1'}{2dA(z^*)}>0\]
and $M_0=M_0(D_2',\gamma)\in\N$ such that if $N\geq M_0$ and $|z-z^*|<\frac{1}{N^{\gamma}}$ then
\[\frac{1}{2NK}\left|v(z)-w(z)\right|\leq\frac{D_2'}{N^{\gamma}}.\]
Case 2: Let $|z-z^*|\geq\frac{1}{N^{\gamma}}$  for some fixed $N\in\N$. We will use \eqref{eq:thmbiz2_0} for this part of the proof.
Since $q<0$ we have that there exists $M_1'=M_1'(\gamma)\in\N$ such that if $N\geq M_1'$ then
\begin{equation}\label{eq:thmbiz2_4}
\e^{Np(z)}=\e^{Nq(z-z^*)^2}\leq\e^{qN^{1-2\gamma}}\leq \frac{1}{N^{\gamma}}.
\end{equation}
Let $D>0$ and $M_0=M_0(D,\gamma)\in\N$ be the constants from Proposition \ref{prop:thmbiz1}. Combining \eqref{eq:thmbiz2_0}, \eqref{eq:thmbiz2_2} and \eqref{eq:thmbiz2_4}, we obtain that if $N\geq \max\{M_0,M_1'\}:=M_2'$ and $|z-z^*|\geq\frac{1}{N^{\gamma}}$ then
\begin{equation*}
\frac{1}{2NK}\left|v(z)-w(z)\right|\leq\frac{D}{dN^{\gamma}}+\frac{1}{N^{\gamma}}\left(\frac{1}{d}+\frac{1}{2A(z^*)}\right).
\end{equation*}
Thus there exist constants
\[D_3':=\frac{D}{d}+\frac{1}{d}+\frac{1}{2A(z^*)}>0\]
and $M_2'=M_2'(D_3',\gamma)\in\N$ such that if $N\geq M_2'$ and $|z-z^*|\geq\frac{1}{N^{\gamma}}$ then
\[\frac{1}{2NK}\left|v(z)-w(z)\right|\leq\frac{D_3'}{N^{\gamma}}.\]
Cases 1 and 2 complete the proof of the statement.
\end{proof}

\begin{prop}\label{prop:thmbiz3}
For the constant $K=K(N)$ in the function
\[v(z)=\frac{2NK}{A(z)+C(z)}\e^{NB(z)},\quad z\in[0,1]\]
we have that $K=\mathcal{O}(N^{-\frac{3}{2}})$.
\end{prop}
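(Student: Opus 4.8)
The plan is to start from the normalization condition $\int_{\alpha}^{\beta} v(z)\,dz=\frac1N$ that defines $K=K(N)$. Substituting the formula for $v$ and solving for $K$ gives
\[
K=\frac{1}{2N^{2}\,I_N},\qquad I_N:=\int_{\alpha}^{\beta}\frac{\e^{NB(z)}}{A(z)+C(z)}\,dz .
\]
Since $v>0$ we have $I_N>0$, and the estimate $K=\mathcal{O}(N^{-3/2})$ is equivalent to a lower bound of the form $I_N\geq c\,N^{-1/2}$ for some $c>0$ and all large $N$. This is the only bound on $I_N$ that is needed, because the statement asserts merely an upper bound (big-$\mathcal{O}$) on $K$.

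To obtain such a lower bound I would first discard the part of the integral outside $[0,1]$, which is legitimate because the integrand is positive and $[0,1]\subseteq[\alpha,\beta]$; this also avoids evaluating $A$ and $C$ outside their domain of definition. On $[0,1]$ the function $A+C$ is continuous by Assumption \ref{ass:main}.a1, hence bounded above by $M:=\max_{z\in[0,1]}(A(z)+C(z))<\infty$, so $\frac{1}{A(z)+C(z)}\geq\frac1M$. Moreover Proposition \ref{prop:segedall1} yields $B(z)\geq r(z-z^*)^2$ with $r<0$, whence $\e^{NB(z)}\geq\e^{Nr(z-z^*)^2}$. Combining these two facts,
\[
I_N\ \geq\ \frac1M\int_{0}^{1}\e^{Nr(z-z^*)^2}\,dz\ =\ \frac1M\int_{0}^{1}\e^{-N|r|(z-z^*)^2}\,dz .
\]

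The remaining integral is a truncated Gaussian, and I would pin down its order by the substitution $s=\sqrt N\,(z-z^*)$, which gives
\[
\int_{0}^{1}\e^{-N|r|(z-z^*)^2}\,dz=\frac{1}{\sqrt N}\int_{-\sqrt N\,z^*}^{\sqrt N\,(1-z^*)}\e^{-|r|s^2}\,ds .
\]
Because $z^*\in(0,1)$, both endpoints tend to $\mp\infty$, so the last integral increases to $\int_{-\infty}^{\infty}\e^{-|r|s^2}\,ds=\sqrt{\pi/|r|}>0$ and, in particular, exceeds $\tfrac12\sqrt{\pi/|r|}$ once $N$ is large enough. This produces $I_N\geq\frac{c}{\sqrt N}$ with $c=\frac{1}{2M}\sqrt{\pi/|r|}$, and therefore
\[
K=\frac{1}{2N^{2}\,I_N}\leq\frac{1}{2c}\,N^{-3/2},
\]
which is the desired conclusion. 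The only genuinely delicate point is keeping the Gaussian asymptotics rigorous and elementary: rather than invoking Laplace's method I would simply use the monotone convergence of the truncated Gaussian integral to its full value, together with the exact scaling factor $N^{-1/2}$, so that the argument stays within the elementary analysis used elsewhere in the paper. Restricting the domain from $[\alpha,\beta]$ back to $[0,1]$, so that $A+C$ is controlled where it is actually defined, is the other small technical wrinkle, but it is harmless since we only seek a lower bound on $I_N$.
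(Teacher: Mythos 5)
Your argument is correct and follows essentially the same route as the paper: both rest on the lower bound $B(z)\geq r(z-z^*)^2$ from Proposition \ref{prop:segedall1}, the positivity and boundedness of $A+C$ on $[0,1]$, the $N^{-1/2}$ scaling of the Gaussian integral, and the normalization $\int_{\alpha}^{\beta}v=\frac1N$. You merely make explicit two details the paper glosses over — that only the lower bound on the integral is needed for the upper bound on $K$, and that restricting to $[0,1]$ is harmless — which is a welcome tightening but not a different proof.
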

\begin{proof}
In Proposition \ref{prop:segedall1} we have seen that there exist negative real numbers $r$ and $R$ such that
\[r(z-z^*)^2\leq B(z)\leq R(z-z^*)^2\text{ for all }z\in[0,1].\]
We know that
\[\int_{-\infty}^{\infty}\e^{-Nz^2}\, dz=\frac{\sqrt{\pi}}{\sqrt{N}}.\]
Since by Assumptions \ref{ass:main}.a2, $A+C$ is a positive continuous function on $[0,1]$, we obtain that
\[\int_0^1 \frac{1}{A(z)+C(z)}\e^{NB(z)}\, dz=\mathcal{O}\left(\frac{1}{\sqrt{N}}\right),\]
that is the integral can be estimated from below and from above by $C/\sqrt{N}$ for some $C>0$ and $N$ large enough.
In \eqref{eq:stacmo} we assumed for $v$ that $\int_{\alpha}^{\beta} v(z)\, dz = 1/N$. Hence
\[\mathcal{O}\left(\frac1N \right) = \int_0^1 v(z)\, dz=2NK\int_0^1 \frac{1}{A(z)+C(z)}\e^{NB(z)}\, dz ,\]
implying that $K=\mathcal{O}(N^{-\frac{3}{2}})$.
\end{proof}

\textbf{Proof of Theorem \ref{thm:main}.} By Proposition \ref{prop:thmbiz2} we know that for each $0<\gamma<\frac{1}{2}$
\[\max_{z\in [0,1]}\frac{1}{2NK}\left|v(z)-w(z)\right|=\mathcal{O}\left(\frac{1}{N^{\gamma}}\right).\]
Using Proposition \ref{prop:thmbiz3} we obtain that $NK=\mathcal{O}(N^{-\frac{1}{2}})$, hence the statement of the theorem follows.

\section{Discussion}\label{sec:discussion}
The method of Subsection \ref{subsec:approx_steadystate} can be used not only for the steady state solution of the mean-field equation (MF) but also at each time instant. We namely know, that the distribution of the solution of (ME) is concentrated at each time around the solution of (MF), $y_1(t)$. Therefore we can substitute $g$ by its value at $y_1(t)$, and we substitute $h$ by the first term of its Taylor expansion around $y_1(t)$, hence, by a linear term.

That is:
\begin{align*}
g(z) &= \frac{1}{2N} (A(z) +C(z))\approx \frac{1}{2N} (A(y_1(t)) +C(y_1(t)))=:b(t),\\
h(z) &= A(z)-C(z) \notag\\
& \approx A(y_1(t)) -C(y_1(t))+\left(A'(y_1(t)) -C'(y_1(t))\right)\cdot (y_1(t)-z)\\
&=:d(t)+l(t)\cdot (y_1(t)-z).
\end{align*}
Putting this in (FP) yields the following Fokker-Planck equation:
\begin{equation}\label{eq:FPappr}
\partial_t \tilde{u}(t,z) = \partial_{zz} (b(t)\tilde{u}(t,z)) - \partial_{z} (d(t)+l(t)\cdot (y_1(t)-z)\tilde{u}(t,z)).\tag{$\widetilde{\mathrm{FP}}$}
\end{equation}
This is the Fokker-Planck equation of a (one-dimensional) \emph{Ornstein-Uhlen\-beck process}. Our conjecture is that using similar methods as above, it is possible to prove that the solution of ($\widetilde{\mathrm{FP}}$) is near to the solution of (FP), hence to the solution of (ME).
% We also would like to prove an estimate for the difference (like (O($\frac{1}{N}$))).

%\newpage

\end{document}